\newcounter{lemma}[section]
\newcounter{corollary}[section]
\newcounter{remark}[section]
\newcounter{theorem}[section]
\newcounter{proposition}[section]
\numberwithin{equation}{section}
\begin{document}

\markboth{\centerline{E.~SEVOST'YANOV}} {\centerline{ON ESTIMATES OF
MODULI OF FAMILIES OF SURFACES}}

\def\cc{\setcounter{equation}{0}
\setcounter{figure}{0}\setcounter{table}{0}}

\overfullrule=0pt

%\normalsize\large

\author{{E. SEVOST'YANOV}\\}

\title{
{\bf ON ESTIMATES OF MODULI OF FAMILIES OF SURFACES}}

\date{\today}
\maketitle

%\large
\begin{abstract}
A class of space mappings of finite distortion with $N$ and
$N^{\,-1}$ Luzin properties with respect to $k$-measured area is
investigated. It is proved that, mappings mentioned above satisfy
upper and lower inequalities for families of $k$-measures surfaces.
\end{abstract}

\bigskip
{\bf 2010 Mathematics Subject Classification: Primary 30C65;
Secondary 30C62, 31A15}

\section{Introduction } As known, a
class of maps with finite area distortion consists of mappings,
which distort distance between points in a finite number of time and
satisfy $N$ and $N^{\,-1}$-Luzin properties with respect to
$k$-dimensional area (see \cite[Ch.~10]{MRSY}). For such mappings,
some modular inequalities with respect to families of surfaces are
established in \cite{MRSY}. At the present paper we strengthen above
results by considering more wide classes of surfaces, for which such
inequalities still hold. The order of modulus $p$ is assumed to
satisfy the inequality $p\geqslant 1.$ Give some definitions.

\medskip
Everywhere below, $D$ is a domain in ${\Bbb R}^n,$ $n\ge 2,$ $m$ is
the Lebesgue measure in ${\Bbb R}^n,$ $m(A)$ the Lebesgue measure of
a measurable set $A\subset {\Bbb R}^n,$ $m_1$ is the linear Lebesgue
measure in ${\Bbb R}.$ Recall that a mapping $f:X\rightarrow Y$
between measurable spaces $\left(X,\Sigma,\mu\right)$ and
$\left(X^{\,\prime},\Sigma^{\,\prime},\mu^{\,\prime}\right)$ is said
to have the {\it $N$-property} if $\mu^{\,\prime}(f(S))=0$ whenever
$\mu(S)=0.$ Similarly, $f$ has the {\it $N^{\,-1}$-property} if
$\mu(S)=0$ whenever $\mu^{\,\prime}(f(S))=0.$
\medskip

\medskip
For $x\in E\subset {\Bbb R}^n$ and a mapping $\varphi:E \rightarrow
{\Bbb R}^n,$ we set
$$L(x,\varphi)=\limsup\limits_{y\rightarrow x, y
\in E}\,\frac{|\varphi(x)-\varphi(y)|}{|y-x|}\,,\quad
l(x,\varphi)=\liminf\limits_{y\rightarrow x, y\in E}\,
\frac{|\varphi(x)-\varphi(y)|}{|y-x|}\,.$$

\medskip
Let $D$ be a domain in ${\Bbb R}^n.$ Following to
\cite[разд.~8.3]{MRSY}, a mapping $f:D\rightarrow {\Bbb R}^n$ is
called a mappings with {\it finite metric distortion,} write $f\in
FMD,$ if $f$ has the Luzin $N$-property and
$$0<l(x, f)\leqslant L (x, f)<\infty$$
for almost all $x\in D.$

\medskip
Let $\omega$ be an open set in  $\overline{{\Bbb R}^k}:={\Bbb
R}^k\cup\{\infty\},$ $k=1,\ldots,n-1.$ A (continuous) mapping
$S:\omega\rightarrow D$ is called a $k-$dimensional surface $S$ in
${\Bbb R}^n.$ Sometimes we call the image $S(\omega)\subset {\Bbb
R}^n$ by the surface $S$, too. The number of preimages $N(y, S)={\rm
card}\,S^{-1}(y)={\rm card}\,\{x\in\omega:S(x)=y\},\ y\in{\Bbb R}^n$
is said to be a {\it multiplicity function} of the surface $S$ at a
point $y\in {\Bbb R}^n.$ In other words, $N(S,y)$ means the
multiplicity of covering of the point $y$ by the surface $S.$

\medskip
For a given Borel set $B\subset {\Bbb R}^n$ (or, more generally, for
a measurable set $B$ with respect to the $k$-dimensional Hausdorff
measure ${\mathcal H}^k$), the $k$-dimensional Hausdorff area of $B$
in ${\Bbb R}^n$ associated with the surface $S$ is determined by
\begin{equation*}
\mathcal A_S(B)=\mathcal A_S(B)=\int\limits_B N(S, y)\, d {\mathcal
H}^k y\,.
\end{equation*}
If $\rho:\,{\Bbb R}^n\rightarrow [0,\infty]$ is a Borel function,
the integral of $\rho$ over $S$ is defined as
$$\int\limits_S
\rho\,d{\mathcal{A}}:=\int\limits_{{\Bbb R}^n}\rho(y)\,N(y,
S)\,d{\mathcal H}^ky\,.$$
Let $\Gamma$ be a family of $k$-dimensional surfaces $S$ in ${\Bbb
R}^n$, $1\le k\le n-1$ (curves for $k=1$). Given $p\geqslant 1,$ the
$p$-module of $\Gamma$ is defined by
$$M_p(\Gamma)=\inf\limits_{\rho\in{\rm adm}\,\Gamma}
\int\limits_{{\Bbb R}^n}\rho^p(x)\,dm(x)\,,$$
where the infimum is taken over all Borel measurable functions
$\rho\geqslant 0$ and such that
\begin{equation}\label{eq8.2.6}
\int\limits_S\rho^k\,d{\mathcal{A}}\geqslant 1\end{equation}
for every $S\in \Gamma.$ We call each such $\rho$ an {\it admissible
function} for $\Gamma$ ($\rho\in {\rm adm}\, \Gamma$). The
$n$-module $M_n(\Gamma)$ will be denoted by $M(\Gamma).$ The modulus
is itself an outer measure on the collection of all families
$\Gamma$ of $k$-dimensional surfaces (see \cite{Fu}).
\medskip

Following \cite{MRSY}, a metric $\rho$ is said to be {\it
extensively admissible} for $\Gamma$ ($\rho\in {\rm ext}_p{\rm
adm}\, \Gamma$) with respect to $p$-module if $\rho\in {\rm adm}\,
(\Gamma\backslash\Gamma_0)$ such that $M_p(\Gamma_0)=0$ (cf.
\cite{Gol11}). Accordingly, we say that a property $\mathcal P$
holds for almost every $k$-dimensional surface with respect to
$p$-modulus, write $p$-a.e. surface, if $\mathcal P$ holds for all
surfaces except a family of zero $p$-module.

\medskip
A surface $S$ in $D$ is a {\it lifting} of a surface $\widetilde{S}$
under $f:D\rightarrow {\Bbb R}^n,$ if $\widetilde{S}=f(S).$

\medskip
Following \cite[section~10.1]{MRSY}, we say that a mapping
$f:D\rightarrow {\Bbb R}^n$ has {\it $(A_k)$-property} with respect
to $p$-modulus if the two conditions hold:

\medskip
$(A^{(1, p)}_k)$\quad for $p$-a.e. $k$-dimensional surface $S$ in
$D$ the restriction $f|_S$ has $N$-pro\-perty with respect to area;
\medskip

$(A^{(2, p)}_k)$\quad for $p$-a.e. $k$-dimensional surface $S_*$ in
$\widetilde{D}=f(D)$ the restriction $f|_S$ has $N^{\,-1}$-property
for each lifting $S$ of $S_*$ with respect to area.

\medskip
We also say that a mapping $f:D\rightarrow {\Bbb R}^n$ is of {\it
finite area distortion in dimension} $k=1,...,n-1$ with respect to
$p$-modulus, abbr. $f\in FAD_k$ with respect to $p$-modulus, if
$f\in FMD$ and has the $(A_k)$-property  with respect to
$p$-modulus.

\medskip
Let $D$ be a domain in ${\Bbb R}^k,$ $k=1,...,n-1.$ If
$S_1:D\rightarrow {\Bbb R}^n$ is a surface and if $S_2$ is a
restriction of $S_1$ to a subdomain $D_*\subset D,$ we write
$S_2\subset S_1.$ We say that $\Gamma_2$ is {\it minorized} by
$\Gamma_1$ and write $\Gamma_2>\Gamma_1$ if every $S\subset\Gamma_2$
has a subsurface which belongs to $\Gamma_1.$ It is known that
$M_p(\Gamma_1)\geqslant M_p(\Gamma_2)$, see \cite[Theorem~1(c)]{Fu}.

\medskip
Set at points $x\in D$ of differentiability of $f$
$$l\left(f^{\,\prime}(x)\right)\,=\,\min\limits_{h\in {\Bbb
R}^n \backslash \{0\}} \frac {|f^{\,\prime}(x)h|}{|h|}\,, \Vert
f^{\,\prime}(x)\Vert\,=\,\max\limits_{h\in {\Bbb R}^n \backslash
\{0\}} \frac {|f^{\,\prime}(x)h|}{|h|}\,, J(x,f)=det
f^{\,\prime}(x)\,,$$
and define for any $x\in D$ and fixed $p\geqslant 1$
\begin{equation}\label{eq0.1.1A}
K_{I, p}(x,f)\quad =\quad\left\{
\begin{array}{rr}
\frac{|J(x,f)|}{{l\left(f^{\,\prime}(x)\right)}^p}, & J(x,f)\ne 0,\\
1,  &  f^{\,\prime}(x)=0, \\
\infty, & {\rm otherwise}
\end{array}
\right.\,,
\end{equation}
$$K_{O, p}(x,f)\quad =\quad \left\{
\begin{array}{rr}
\frac{\Vert f^\prime(x)\Vert^p}{|J(x,f)|}, & J(x,f)\ne 0,\\
1,  &  f^{\,\prime}(x)=0, \\
\infty, & {\rm otherwise}
\end{array}
\right.\,.$$

\medskip
A mapping $f:D\rightarrow {\Bbb R}^n$ is {\it discrete} if
$f^{-1}(y)$ consists of isolated points for each $y\in{\Bbb R}^n,$
and $f$ is {\it open} if it maps open sets onto open sets. The
notation $f:D\rightarrow {\Bbb R}^n$ assumes that $f$ is continuous.
In what follows, a mapping $f$ is supposed to be orientation
preserving, i.e., the topological index $µ(y, f,G)>0$ for an
arbitrary domain $G\subset D$ such that $\overline{G}\subset D$ and
$y\in f(G)\setminus f(\partial G).$ Let $f:D\rightarrow {\Bbb R}^n$
be a mapping and suppose that there is a domain $G\subset D,$
$\overline{G}\subset D,$ for which $
f^{\,-1}\left(f(x)\right)=\left\{x\right\}.$ Then the quantity
$\mu(f(x), f, G),$ which is referred to as the local topological
index, does not depend on the choice of the domain $G$ and is
denoted by $i(x, f).$

\medskip
The following proposition can be found in \cite[Lemma 8.3]{MRSY}.

\medskip
\begin{lemma}\label{lem2.4}\label{pr1}{\sl\, Let $f:D\rightarrow{\Bbb R}^n$ be differentiable
a.e. in $D$ and have $N$-- and $N^{-1}$--pro\-per\-ties. Then there
is a countable collection of compact sets $C^*_k\subset D$ such that
$m(B_0)=0$ where $B=D\setminus\bigcup\limits_{k=1}\limits^{\infty}
C^*_k$ and $f|_{C^*_k}$ is one--to--one and bi--lipschitz for every
$k=1,2,\ldots ,$ and, moreover, $f$ is differentiable at points of
$C_k^*$ with $J(x,f)\ne 0.$}
\end{lemma}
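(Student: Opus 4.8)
The plan is to reduce the statement to standard facts from geometric measure theory and Federer's work on differentiable maps, combined with Egorov-type and Lusin-type exhaustion arguments. First I would invoke the almost everywhere differentiability of $f$ to write $D$, up to a set $Z$ with $m(Z)=0$, as the set of points where $f$ is differentiable. On $D\setminus Z$ I would further discard the set $Z'$ where $J(x,f)=0$; the key point is that on $Z'$ the linear map $f^{\,\prime}(x)$ is singular, so its image is a lower-dimensional subspace, and by a standard argument (essentially the area formula applied to the differentiable map, or Sard-type reasoning as in Federer) one gets $m(f(Z'))=0$. Combined with the $N$-property one has $m(f(Z))=0$, hence by the $N^{\,-1}$-property $m(f^{\,-1}(f(Z)\cup f(Z')))=0$; enlarging $Z\cup Z'$ to a set $B$ of measure zero whose complement consists only of differentiability points with $J(x,f)\ne 0$, we may assume $f$ is everywhere differentiable on $D\setminus B$ with nonvanishing Jacobian.

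Next I would construct the compact sets $C_k^*$. On $D\setminus B$, for each pair of positive integers $(i,j)$ consider the set $E_{i,j}$ of points $x$ such that
$$\frac{1}{i}\,|y-x|\leqslant |f(y)-f(x)|\leqslant i\,|y-x|\qquad\text{for all }y\in D\text{ with }|y-x|<\tfrac1j\,.$$
Each $E_{i,j}$ is (relatively) closed in the set where the stated inequality makes sense, and on $E_{i,j}$ the restriction $f$ is bi-Lipschitz with constants depending only on $i$. Because every point of $D\setminus B$ is a point of differentiability with $l(f^{\,\prime}(x))>0$, it satisfies such a two-sided estimate for suitable $i,j$, so $D\setminus B=\bigcup_{i,j}E_{i,j}$ up to a null set — more precisely one checks $m\big((D\setminus B)\setminus\bigcup_{i,j}E_{i,j}\big)=0$ using that non-differentiability or degeneracy has been excluded. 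Then I would intersect with a countable exhaustion of $D$ by compact sets and take closures within $E_{i,j}$ (the bi-Lipschitz estimate passes to the closure), subdivide each resulting compact set into countably many pieces of small diameter so that $f$ restricted to each piece is injective (two points in a piece of diameter $<1/j$ with the same image would contradict the lower bound $\frac1i|y-x|\le|f(y)-f(x)|$), and finally relabel the resulting countable family as $\{C_k^*\}$.

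It then remains to verify the two assertions: that $f|_{C_k^*}$ is one-to-one and bi-Lipschitz — immediate from the construction — and that $m(B_0)=0$, i.e. the leftover set not covered by $\bigcup_k C_k^*$ is null. For the latter I would argue that the uncovered part consists of: the original null set $B$; the set where the two-sided Lipschitz estimates fail, which is null by differentiability with $J\ne 0$ (hence $l(f')>0$) off $B$; and the countably many boundary/overlap corrections, each of which is arranged to be null. The main obstacle, and the step deserving the most care, is precisely this measure-zero bookkeeping: ensuring that discarding degenerate points, passing to closures of the $E_{i,j}$, and subdividing into injectivity pieces all introduce only null sets, and in particular that $m(f(Z'))=0$ on the Jacobian-zero set so that the $N^{\,-1}$-property can be applied to push the exceptional set back into the source. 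Everything else is a routine application of the definition of differentiability together with $\sigma$-compactness of $D$.
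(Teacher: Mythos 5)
The paper does not prove this lemma at all: it is quoted verbatim from \cite[Lemma~8.3]{MRSY}, so there is no internal proof to compare against. Your argument is, in outline, the standard proof of that result and it is essentially correct: the two-sided estimate sets $E_{i,j}$ relatively closed in $D$, the injectivity of $f$ on pieces of $E_{i,j}$ of diameter less than $1/j$ via the lower bound $\frac1i|y-x|\leqslant|f(y)-f(x)|$, and the use of the $N^{-1}$-property to kill the set $Z'$ where $J(x,f)=0$ (after noting $m(f(Z'))=0$ by the area/Sard-type argument for a.e.\ differentiable maps, \cite[3.1.8]{Fe}) are exactly the right ingredients, and the $N^{-1}$-hypothesis enters nowhere else. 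One step should be repaired: you propose to "take closures within $E_{i,j}$", but the closure of $E_{i,j}\cap K$ may reintroduce points where $f$ is not differentiable or $J(x,f)=0$, which would violate the last clause of the lemma. The clean fix is to intersect $E_{i,j}\cap K$ with the measurable co-null set $D\setminus(Z\cup Z')$ and then use inner regularity of Lebesgue measure to exhaust that intersection, up to a null set, by compact subsets; these, chopped into finitely many pieces of diameter below $1/j$, are the $C_k^*$. With that adjustment the measure-zero bookkeeping you describe goes through and the proof is complete.
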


\section{The analog of the V\"{a}is\"{a}l\"{a} inequality}

\medskip
The following result generalizes the well--known V\"{a}is\"{a}l\"{a}
inequality for the mappings with bounded distortion, see
\cite[Theorem 3.1]{Va$_3$} and \cite[Theorem~9.1, гл.~II]{Ri}. For
mappings with finite distortion see e.g. \cite[Theorems~8.6 and
Lemma~10.2]{MRSY}, \cite[Theorem~4.1]{KO}, \cite[Theorem~7]{HP} and
\cite[Theorem~3.1]{SalSev}.

\medskip
\begin{theorem}\label{th3.1}
{\sl Let $f:D\rightarrow {\Bbb R}^n$ be an open discrete mapping of
finite metric distortion with the $(A^{(2, p)}_k)$-property for some
$p\geqslant k,$ $1\leqslant k\leqslant n-1.$ Let $\Gamma$ be a
family of $k$-measured surfaces $\alpha$ in $D,$ $\Gamma^{\,\prime}$
be a family of $k$-measured surfaces $\beta:\omega\rightarrow f(D),$
and $m$ be a positive integer such that the following is true. For
every curve $\beta\in \Gamma^{\,\prime}$ there are surfaces
$\alpha_1:\omega_1\rightarrow D,$ $\ldots,$
$\alpha_m:\omega_m\rightarrow D$ in $\Gamma$ such that $f\circ
\alpha_j\subset \beta$ for all $j=1,\ldots,m,$ and for every $x\in
D$ and all $z\in \omega_j$ the equality $\alpha_j(z)=x$ holds at
most $i(x,f)$ indices $j.$ Then
%
%\begin{equation}\label{equa8}
$$M_p(\Gamma^{\,\prime} )\quad\leqslant\quad \frac{1}{m}\quad\int\limits_D
K_{I, p}\left(x,\,f\right)\cdot \rho^p (x)\ \ dm(x)\qquad \forall
\,\,\rho \in {\rm }\,{\rm adm}\,\Gamma\,.$$ }
\end{theorem}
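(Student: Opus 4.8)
The plan is to reduce the Väisälä-type inequality to a pointwise change-of-variables estimate on the "good" part of $D$ supplied by Lemma~\ref{lem2.4}, and then to dispose of the exceptional surfaces using the $(A^{(2,p)}_k)$-property. Fix $\rho\in{\rm adm}\,\Gamma$; we must produce an admissible function $\widetilde\rho$ for $\Gamma^{\,\prime}$ (or for $\Gamma^{\,\prime}$ minus a family of zero $p$-module, which suffices since the modulus is an outer measure) whose $p$-integral is controlled by $\frac1m\int_D K_{I,p}(x,f)\rho^p\,dm$. Since $f\in FMD$, by Lemma~\ref{lem2.4} there is a countable collection of compact sets $C^*_j\subset D$ on which $f$ is bi-Lipschitz, one-to-one, differentiable with $J(x,f)\ne0$, and $m(B)=0$ where $B=D\setminus\bigcup_j C^*_j$. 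On $f(C^*_j)$ the local inverse $g_j=(f|_{C^*_j})^{-1}$ is Lipschitz, so we may define, for $y\in f(D)$,
$$\widetilde\rho(y)\;=\;\begin{cases}\dfrac{1}{m}\ \Big(\sum\limits_{j}\rho(g_j(y))\,\big\|g_j^{\,\prime}(y)\big\|^{-1}\,\chi_{f(C^*_j)}(y)\Big) & \text{suitably normalised,}\\[2mm] 0 & \text{otherwise,}\end{cases}$$
more precisely one takes $\widetilde\rho(y)^k$ to be $\frac1m$ times the sum over sheets of $\rho^k(x)\,/\,l(f'(x))^{k}$ evaluated at the preimages $x\in f^{-1}(y)\cap\bigcup C^*_j$; the exact form is chosen so that the admissibility computation below closes.

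Next I would verify admissibility of $\widetilde\rho$ for $p$-a.e. $\beta\in\Gamma^{\,\prime}$. Take such a $\beta$ and the surfaces $\alpha_1,\dots,\alpha_m\in\Gamma$ with $f\circ\alpha_j\subset\beta$ and the multiplicity bound by $i(x,f)$. Discard the surfaces $\beta$ for which some lifting $\alpha_j$ fails the $N^{\,-1}$-property with respect to area — by $(A^{(2,p)}_k)$ these form a family of zero $p$-module, so we lose nothing. For the remaining $\beta$: since each $\alpha_j$ has the $N^{-1}$-property, ${\mathcal H}^k$-almost every point of $\alpha_j$ lies over $\bigcup_j C^*_j$ up to a set that $f|_{\alpha_j}$ maps to an ${\mathcal H}^k$-null set, hence contributes nothing to $\int_\beta$; on the part over $\bigcup C^*_j$ we invoke the change-of-variables / area formula for the bi-Lipschitz pieces $f|_{C^*_j}$ to pass from an integral over $\alpha_j$ to an integral over $f\circ\alpha_j\subset\beta$. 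Summing the admissibility inequality $\int_{\alpha_j}\rho^k\,d{\mathcal A}\ge1$ over $j=1,\dots,m$, using that the total multiplicity of the $\alpha_j$ over any $x$ is at most $i(x,f)$ (which is exactly the factor that appears when you unfold $\int_\beta\widetilde\rho^k\,d{\mathcal A}$ through the several sheets, since $N(f\circ\alpha_j,\cdot)$ relates to $N(\alpha_j,\cdot)$ and the local index), one gets $\int_\beta\widetilde\rho^k\,d{\mathcal A}\ge\frac1m\sum_{j=1}^m\int_{\alpha_j}\rho^k\,d{\mathcal A}\ge1$. Thus $\widetilde\rho\in{\rm ext}_p{\rm adm}\,\Gamma^{\,\prime}$.

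Finally I would estimate $\int_{{\Bbb R}^n}\widetilde\rho^{\,p}\,dm$. By the same bi-Lipschitz area formula applied in the target (change of variables $y=f(x)$, $x\in C^*_j$, $dm(y)=|J(x,f)|\,dm(x)$), each sheet contributes
$$\int_{f(C^*_j)}\Big(\rho(g_j(y))\,l(f'(g_j(y)))^{-1}\Big)^{p}dm(y)\;=\;\int_{C^*_j}\rho^p(x)\,\frac{|J(x,f)|}{l(f'(x))^{p}}\,dm(x)\;=\;\int_{C^*_j}K_{I,p}(x,f)\,\rho^p(x)\,dm(x),$$
and since $p\ge k$ the $\ell^p$–$\ell^k$ inequality (summing over the at most countably many overlapping sheets, with the multiplicity controlled by $i(x,f)$ absorbed into the normalisation) lets one bound $\int\widetilde\rho^{\,p}\,dm$ by $\frac1m\sum_j\int_{C^*_j}K_{I,p}(x,f)\rho^p\,dm\le\frac1m\int_D K_{I,p}(x,f)\rho^p\,dm$, the sets $C^*_j$ covering $D$ up to the null set $B$ on which the integrand is irrelevant. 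Combining with $M_p(\Gamma^{\,\prime})\le\int\widetilde\rho^{\,p}\,dm$ (valid for extensively admissible functions) gives the claim. The main obstacle I expect is the bookkeeping of multiplicities: making the definition of $\widetilde\rho$ precise so that, simultaneously, the sum over sheets in the admissibility step produces at least $\frac1m\sum_j\int_{\alpha_j}\rho^k$ and the sum over sheets in the integral step produces at most $\frac1m\int_D K_{I,p}\rho^p$ — this is exactly where the hypothesis "$\alpha_j(z)=x$ for at most $i(x,f)$ indices $j$" together with $p\ge k$ must be used, and getting the powers and the factor $1/m$ to land correctly is the delicate point.
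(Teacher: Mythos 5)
Your overall architecture matches the paper's: decompose $D$ via Lemma~\ref{lem2.4} into disjoint Borel pieces on which $f$ is injective and bi-Lipschitz, push $\rho/l(f^{\,\prime})$ forward to a function $\widetilde\rho$ on $f(D)$, use the $(A^{(2,p)}_k)$-property to discard the surfaces whose liftings charge the exceptional null set, and then run the area formula in both directions. The exceptional-set step and the admissibility step are essentially the paper's. But there is a genuine gap in the integral estimate, and it sits exactly at the point you flag as "delicate": you define $\widetilde\rho(y)^k$ as $\frac1m$ times the \emph{full} sum of $\rho^k(x)/l(f^{\,\prime}(x))^k$ over \emph{all} preimages $x\in f^{-1}(y)\cap\bigcup C^*_j$ (a countable, possibly infinite set, since $f$ is merely discrete), and then invoke "the $\ell^p$--$\ell^k$ inequality" to get $\int\widetilde\rho^{\,p}\,dm\le\frac1m\int_D K_{I,p}\rho^p\,dm$. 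That inequality goes the wrong way: for $p\geqslant k$ one has $\sum a_\nu^{p}\le\left(\sum a_\nu^{k}\right)^{p/k}$, whereas you need $\left(\frac1m\sum a_\nu^{k}\right)^{p/k}\le\frac1m\sum a_\nu^{p}$, which is false for an unbounded number of terms when $p>k$. With the full sum, $\widetilde\rho^{\,p}$ is controlled only up to a factor depending on the local valence of $f$, which is not assumed bounded, so the claimed constant $1/m$ cannot be recovered.

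The paper's device, which your proposal is missing, is to define
$$\widetilde{\rho}^{\,k}(y)=\frac{1}{m}\,\sup_{C}\sum_{x\in C}\frac{\rho^k(x)}{l^k\left(f^{\,\prime}(x)\right)}\,,$$
where $C$ ranges over subsets of $f^{-1}(y)\setminus B_0$ with ${\rm card}\,C\leqslant m$. The truncation to at most $m$ preimages costs nothing in admissibility, because the hypothesis that $\alpha_j(z)=x$ holds for at most $i(x,f)$ indices $j$ guarantees that the $m$ liftings of $\beta$ produce at most $m$ \emph{distinct} preimages of $y$, which therefore form one of the competitors in the supremum; and it is precisely what makes the finite H\"older (power-mean) inequality $\left(\frac1m\sum_{i=1}^{s}a_i\right)^{p/k}\leqslant\frac1m\sum_{i=1}^{s}a_i^{p/k}$ for $s\leqslant m$ applicable, after which the supremum is dominated by the full series $\sum_\nu\rho_\nu^p(y)$ and the change of variables on each $B_\nu$ closes the estimate. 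Without this truncated supremum your definition of $\widetilde\rho$ cannot simultaneously satisfy the admissibility lower bound and the integral upper bound, so the proof as proposed does not go through for $p>k$.
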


Here we write $f\circ\alpha\subset \beta$ iff
$\beta:\omega\rightarrow {\Bbb R}^n,$ $\omega$ is open set in ${\Bbb
R}^k,$ and $\beta|_{\omega_1}=f\circ\alpha$ for some open set
$\omega_1\subset\omega.$

\begin{proof} Let $B_0$ and $C_{\nu}^{*}$  be as in Lemma \ref{lem2.4} and $B_f$ is a branch set for $f$ in $D.$
Note that $m(B_f)=0,$ see \cite[Proposition~8.4]{MRSY}. Setting by
induction $B_0\,=\,B\cup B_f,$ $B_1\,=\,C_{1}^{*}\setminus B_f,$
$B_2\,=\,C_{2}^{*}\setminus (B_1\cup B_f)\ldots\,\,,$
$$B_{\nu}\,\,=\,\, C_{\nu}^{*}\setminus
\left(\bigcup\limits_{i=1}^{\nu-1}B_i \cup B_f\right)\,,$$
we obtain a  countable covering of $D$ consisting of mutually
disjoint Borel sets $B_{\nu},$ $\nu=1,2,\ldots,$ with $m(B_0)=0.$
Since $f$ has $N$-property, $m(f(B_0))=0.$ By
\cite[Theorem~9.1]{MRSY}, ${\mathcal A}_{S_*}(f(B_0))=0$ for
$p$-a.e. surface $S_*$ in $f(D)$ and all $S$ for which $f(S)=S_*.$
Thus, by $(A^{(2, p)}_k)$-property
\begin{equation}\label{equ3}
{\mathcal A}_S(B_0)=0
\end{equation}
for $p$-a.e. surface $S_*$ in $f(D)$ and all $S$ with $f(S)=S_*.$ We
show that (\ref{equ3}) holds for $p$-a.e. $S_*\in \Gamma^{\,\prime}$
and every $S\in \Gamma$ such that $f\circ S\subset S_*.$

Denote $\Gamma_1$ a family of surface $S_*\,\in\,\Gamma^{\,\prime}$
for which
\begin{equation}\label{equ4}
{\mathcal A}_S(B_0)>0
\end{equation}
and some surface $S$ with $f\circ S\subset S_*.$ Assume in the
contrary, that $M_p(\Gamma_1)>0.$ Let $\Gamma_2$ be a family of all
subsurfaces $S_{**}$ of $\Gamma_1,$ having a lifting $S,$ for which
(\ref{equ4}) holds. Since $\Gamma_2<\Gamma_1,$ we obtain that
$M_p(\Gamma_2)\geqslant M_p(\Gamma_1)>0.$ We reach a contradiction
to our assumption that $M_p(\Gamma_1)>0.$

Let $\rho\,\in\,{\rm  }\,\,{\rm adm}\,\Gamma.$ Set
$$\rho^*(x)\,=\,\left\{\begin{array}{rr}
\rho(x)/l\left(f^{\prime}(x)\right), &   x\in D\setminus B_0,\\
0,  &  x\in B_0
\end{array}
\right.$$
and
%
%\begin{equation}\label{equa9}
$$\widetilde{\rho}(y)\quad=\quad\left(\frac{1}{m}\cdot
\chi_{f\left(D\setminus B_0
\right)}(y)\sup\limits_{C}\sum\limits_{x\,\in\,C}\rho^{*k}(x)\right)^{1/k}\,,$$
%\end{equation}
and $C$ runs over all subsets of $f^{\,-1}(y)$ in $D\setminus B_0$
such that ${\rm card}\,C\leqslant m.$ Note that
\begin{equation}\label{equ5}
\widetilde{\rho}(y)\quad=\quad\left(\frac{1}{m}\cdot
\sup\sum\limits_{i=1}^s \rho^k_{{\nu}_i}(y)\right)^{1/k}\,,
\end{equation}
where $\sup$ in (\ref{equ5}) is taken over all
$\left\{{\nu}_{i_1},\ldots,{\nu}_{i_s}\right\}$  such that
${\nu}_i\in\,{\Bbb N},$ ${\nu}_i\ne {\nu}_j$ if $i\ne j,$ all $s\le
m$ and
$$
\rho_{\nu}(y)\,=\left\{\begin{array}{rr}
\,\rho^{*}\left(f_{\nu}^{-1}(y)\right), &   y\in f(B_{\nu}),\\
0,  &  y\notin f(B_{\nu})
\end{array}
\right.$$
where $f_{\nu}=f|_{B_{\nu}},$ $\nu=1,2,\ldots\,$ is injective. Thus,
the function  $\widetilde{\rho}(y)$ is Borel, see e.g.
\cite[2.3.2]{Fe}.

\medskip
Denote $$A_{\nu l}:=\{y\in {\Bbb R}^n: \exists\,w\in \omega_l:
\alpha_l(w)\in B_{\nu}, f(\alpha_l(w))=y\}\,.$$
Set
$$
\varphi_{\nu l}(y):=f_{\nu}^{\,-1}(y), \qquad y\in A_{\nu l}\,.$$
Since $f|_{B_{\nu}}$ is a homeomorphism, $\varphi_{\nu l}$ are
well-defined. Moreover, since $\alpha_j(z)=x$ holds for at most
$i(x,f)$ indices $j,$ there are at most $m$ points $\varphi_{\nu_1
1}(y),\ldots, \varphi_{\nu_s s}(y),$ $1\leqslant s\leqslant m,$
which are different, i.e., $\nu_i\ne \nu_j,$ $i\ne j.$ Setting
$\rho^*(\varphi_{\nu l}(y))=0$ for $y\not\in A_{\nu l},$ we observe
that $\rho^*\circ\varphi_{\nu l}:{\Bbb R}^n\rightarrow {\Bbb R}$ is
a Borel function. Now, we obtain from (\ref{equ5}) that
$$\widetilde{\rho}^{\,k}(y)\geqslant \frac{1}{m}\sum\limits_{l=1}^{s}
\rho^{*k}(\varphi_{\nu_l l}(y))\,=\frac{1}{m}
\sum\limits_{l=1}^{m}\sum\limits_{i=1}^{\infty}
\rho^{*k}(\varphi_{\nu l}(y))\,.
$$
Now
%
%
%\begin{equation}\label{equa12}
%
$$\int\limits_{\beta}\widetilde{\rho}^{\,k}\,d{\mathcal A_*}=
\int\limits_{{\Bbb R}^n}\widetilde{\rho}^{\,k}\,N(\beta,
y)d{\mathcal H}^ky\quad\geqslant \frac{1}{m}\cdot
\sum\limits_{l=1}^{m}\sum\limits_{\nu=1}^{\infty}\int\limits_{{\Bbb
R}^n}\rho^{*k}(\varphi_{\nu l}(y))N(\beta, y)\,d{\mathcal H}^ky=
$$
\begin{equation}\label{eq1A}
=\frac{1}{m}\cdot
\sum\limits_{l=1}^{m}\sum\limits_{\nu=1}^{\infty}\int\limits_{A_{\nu
l}}\rho^{*k}(f_{\nu}^{\,-1}(y))N(\beta, y)\,d{\mathcal H}^ky\,.
\end{equation}
Setting $B_{\nu l}=\{x\in B_{\nu}: f(x)\in A_{\nu l}\},$ by
\cite[Theorem~3.2.5]{Fe} for $m=k$ we obtain that
$$
\sum\limits_{\nu=1}^{\infty}\int\limits_{A_{\nu
l}}\rho^{*k}(f_{\nu}^{\,-1}(y))N(\beta, y)\,d{\mathcal
H}^ky=\sum\limits_{\nu=1}^{\infty}\int\limits_{B_{\nu
l}}\rho^{*k}(x)N(\beta, f(x))\,J_kf(x) d{\mathcal H}^kx\,\geqslant$$
$$\geqslant \sum\limits_{\nu=1}^{\infty}\int\limits_{B_{\nu
l}}\frac{\rho^k(x)}{l^k\left(f^{\prime}(x)\right)} N(\alpha_l,
x)J_kf(x)\,d{\mathcal H}^kx\,\geqslant
\sum\limits_{\nu=1}^{\infty}\int\limits_{B_{\nu
l}}\rho^k(x)N(\alpha_l, x)d{\mathcal H}^kx=$$
\begin{equation}\label{eq3}
=\sum\limits_{\nu=1}^{\infty}\int\limits_{{\Bbb
R}^n}\rho^k(x)N(\alpha_l, x)\chi_{B_{\nu l}}(x) d{\mathcal H}^kx
=\int\limits_{{\Bbb R}^n}\rho^k(x)N(\alpha_l,
x)\sum\limits_{\nu=1}^{\infty}\chi_{B_{\nu l}}(x) d{\mathcal H}^kx=
\end{equation}
$$=\int\limits_{{\Bbb R}^n}\rho^k(x)N(\alpha_l,
x)d{\mathcal H}^kx\geqslant 1\,.$$
It follows from (\ref{eq1A}) and (\ref{eq3}) that
$\widetilde{\rho}\,\in\,{\rm }\,\,{\rm
adm}\,\Gamma^{\,\prime}\setminus \Gamma_0,$ where $M_p(\Gamma_0)=0.$
By subadditivity of $p$-modulus
\begin{equation}\label{equ6}
M_p\left(\Gamma^{\,\prime}\right)\quad\leqslant\quad\int\limits_{f(D)}\widetilde{\rho}\,^p(y)\,\,dm(y)\,.
\end{equation}
By \cite[Theorem~3.2.5]{Fe} for $m=n$ we obtain that
\begin{equation}\label{equ7}
\int\limits_{B_{\nu}}K_{I,p}(x,\,f)\cdot\rho^p(x)\,\,dm(x)\quad=\quad\int\limits_{f(D)}
\rho^p_{\nu}(y)\,dm(y)\,.
\end{equation}
By H\"{o}lder inequality for series,
\begin{equation}\label{equ8}
\left(\frac{1}{m}\cdot\sum\limits_{i=1}^{s}\rho^k_{{\nu}_i}(y)\right)^{p/k}\quad\leqslant\quad
\frac{1}{m}\cdot \sum\limits_{i=1}^{s}\,\rho^p_{{\nu}_i}(y)
\end{equation}
for each $1 \leqslant s \leqslant m$ and every
$\left\{{\nu}_1,\ldots,{\nu}_s\right\},$ ${\nu}_i\in {\Bbb N},$
${\nu}_i\ne {\nu}_j,$ if $i\ne j.$

Finally, by Lebesgue positive convergence theorem, see Theorem
I.12.3 in \cite{Sa}, we conclude from (\ref{equ6}), (\ref{equ7}) and
(\ref{equ8}) that
%
%\begin{equation}\label{equa15}
$$\frac{1}{m}\cdot\int\limits_{D}K_{I, p}(x,\,f)\cdot\rho^p(x)\,\,dm(x)\quad
=\quad\frac{1}{m}\cdot\int\limits_{f\left(D\right)}\,\sum\limits_{\nu=1}^{\infty}
\rho_{\nu}^p(y)\,dm(y)\quad\geqslant$$
%
%\end{equation}
%
$$\geqslant\quad\frac{1}{m}\cdot\int\limits_{f\left(D\right)}
\sup\limits_{\left\{{\nu}_1,\ldots,{\nu}_s\right\},\, {\nu}_i\in
{\Bbb N},\atop {\nu}_i\ne {\nu}_j, \,\, i\ne j}\sum\limits_{i=1}^s
\rho^p_{{\nu}_i}(y)\,dm(y)\quad=\quad
\int\limits_{f\left(D\right)}\,\widetilde{\rho}^{\,p}(y)\,dm(y)\quad\geqslant\quad
M_p(\Gamma^{\,\prime})\,.$$
The proof is complete.~$\Box$
\end{proof}

\section{On another modular inequality}

The following statement have been proved for $p=n$ in
\cite[Lemma~10.1]{MRSY}.

\medskip
\begin{theorem}\label{lem8.3.1} Let a mapping $f:D\rightarrow{\Bbb R}^n$ be of finite
metric distortion with $(A^{(1, p)}_k)$-property for some
$p\geqslant k,$ $1\leqslant k\leqslant n-1,$ and let a set
$E\subset\Omega$ be Borel.  Then
\begin{equation}\label{eq8.3.2}
M_p(\Gamma) \leqslant \int\limits_{f(E)} K_{I, p}(y, f^{\,-1},
E)\cdot\rho_*^p(y) dm(y)\,,\end{equation}
for every family $\Gamma$ of $k$-dimensional surfaces $S$ in $E$ and
$\varrho_*\in {\rm adm}\,f(\Gamma)$ where
\begin{equation}\label{eq8.3.3}
K_{I, p}(y, f^{\,-1},E)\ =\sum\limits_{x\in E\cap f^{\,-1}(y)}K_{O,
p}(x, f)\,.\end{equation}
\end{theorem}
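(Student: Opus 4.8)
The plan is to prove the inequality \eqref{eq8.3.2} by transferring an admissible metric $\varrho_*$ for $f(\Gamma)$ back to the source domain and invoking the same decomposition machinery as in Theorem~\ref{th3.1}. First I would apply Lemma~\ref{lem2.4}: since $f\in FMD$ is differentiable a.e.\ and (together with the $(A^{(1,p)}_k)$-property, which gives the $N$-property on $p$-a.e.\ surface) has sufficient regularity, we obtain a countable disjoint Borel decomposition $E=B_0\cup\bigcup_{\nu\ge 1}B_\nu$ with $m(B_0)=0$, $f|_{B_\nu}$ injective and bi-Lipschitz, and $J(x,f)\ne 0$ on each $B_\nu$. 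Using the $(A^{(1,p)}_k)$-property and \cite[Theorem~9.1]{MRSY} one discards from $\Gamma$ the family $\Gamma_0$ of zero $p$-modulus consisting of surfaces $S$ with $\mathcal A_S(B_0)>0$, so it suffices to establish \eqref{eq8.3.2} working with $S\in\Gamma\setminus\Gamma_0$.

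Next, given $\varrho_*\in{\rm adm}\,f(\Gamma)$, I would define the pulled-back metric on $D$ by
$$\varrho(x)\ =\ \varrho_*(f(x))\,\|f^{\,\prime}(x)\|\,\chi_{D\setminus B_0}(x)\,,$$
and check it is Borel (it is a countable sum of compositions with the bi-Lipschitz pieces $f|_{B_\nu}$). The key computation is to show $\varrho\in{\rm adm}\,(\Gamma\setminus\Gamma_0)$: for $S\in\Gamma\setminus\Gamma_0$, one writes $S_*=f(S)$ and uses the area/co-area change of variables \cite[Theorem~3.2.5]{Fe} with $m=k$ on each bi-Lipschitz piece, together with the bound $|\varphi'|\ge 1/\|f'\|$ for the local inverse $\varphi=f|_{B_\nu}^{-1}$ (equivalently $J_k f(x)\le \|f^{\,\prime}(x)\|^k$ along the $k$-surface), to get
$$\int\limits_{S}\varrho^{\,k}\,d\mathcal A\ \ge\ \int\limits_{S_*}\varrho_*^{\,k}\,d\mathcal A_*\ \ge\ 1\,.$$
Then by definition of modulus, $M_p(\Gamma)\le M_p(\Gamma\setminus\Gamma_0)\le\int_D\varrho^{\,p}(x)\,dm(x)$.

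It then remains to transform $\int_D\varrho^{\,p}\,dm$ into the right-hand side of \eqref{eq8.3.2}. I would split the integral over the $B_\nu$ (recall $m(B_0)=0$), apply \cite[Theorem~3.2.5]{Fe} with $m=n$ on each piece $B_\nu\cap f^{-1}(f(E))$ to write
$$\int\limits_{B_\nu}\varrho_*^{\,p}(f(x))\,\|f^{\,\prime}(x)\|^p\,dm(x)\ =\ \int\limits_{f(B_\nu)}\varrho_*^{\,p}(y)\,\frac{\|f^{\,\prime}(f_\nu^{-1}(y))\|^p}{|J(f_\nu^{-1}(y),f)|}\,dm(y)\ =\ \int\limits_{f(B_\nu)}\varrho_*^{\,p}(y)\,K_{O,p}(f_\nu^{-1}(y),f)\,dm(y)\,,$$
and then sum over $\nu$, using monotone convergence and the disjointness of the $f(B_\nu\cap E)$-preimages, so that $\sum_\nu K_{O,p}(f_\nu^{-1}(y),f)\chi_{f(B_\nu\cap E)}(y)$ reassembles to $\sum_{x\in E\cap f^{-1}(y)}K_{O,p}(x,f)=K_{I,p}(y,f^{-1},E)$ by \eqref{eq8.3.3} (points of $B_0$ contribute nothing since $m(B_0)=0$ and, by the $N$-property, $m(f(B_0))=0$). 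The main obstacle I anticipate is the admissibility step: correctly handling the $k$-dimensional Jacobian $J_kf$ restricted to the surface — showing $J_kf(x)\le\|f^{\,\prime}(x)\|^k$ and that the pushforward of $\varrho^k\,d\mathcal A$ dominates $\varrho_*^k\,d\mathcal A_*$ accounting for multiplicities $N(S,\cdot)$ versus $N(S_*,\cdot)$ — which requires care because $f$ need not be injective and several sheets of $S$ may overlap; this is exactly where the bi-Lipschitz decomposition and the $(A^{(1,p)}_k)$ $N$-property on almost every surface are essential.
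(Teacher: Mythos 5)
Your proposal is correct and follows essentially the same route as the paper: the bi-Lipschitz decomposition from Lemma~\ref{lem2.4}, discarding the exceptional family of surfaces meeting $B_0$ in positive area via the $(A^{(1,p)}_k)$-property and \cite[Theorem~9.1]{MRSY}, the pulled-back metric $\varrho_*(f(x))\,\Vert f^{\,\prime}(x)\Vert$ whose admissibility follows from the area formula together with $J_kf(x)\leqslant\Vert f^{\,\prime}(x)\Vert^k$, and the final piecewise change of variables reassembling $\sum_{x\in E\cap f^{-1}(y)}K_{O,p}(x,f)$ into $K_{I,p}(y,f^{-1},E)$. The multiplicity issue you flag as the main obstacle is indeed the delicate point, and the paper handles it exactly as you propose, by working on the injective pieces $B_\nu$.
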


\begin{proof} By Theorem III.6.6 (iV)  \cite{Sa},
$E=B\cup B_0,$ where $B$ is a set of the class $F_{\sigma},$ and
$m(B_0)=0.$ Consequently, $f(E)$ is measurable by $N$-pro\-per\-ty
of $f.$ Without loss of generality, we may assume that $f(E)$ is a
Borel set and that $\rho_*\equiv 0$ outside of $f(E).$ In other case
we can find a Borel set $G$ such that $f(E)\subset G$ and
$m(G\setminus f(E))=0,$ see (ii) of the Theorem III.6.6 in
\cite{Sa}. Now, a set $f^{-1}(G)$ is  Borel and $E\subset
f^{-1}(G).$ Note that in this case the function
$$
\rho^G_* (y)= \left \{\begin{array}{rr}
\rho_*(y), &  {\rm for }\,\,\, y\in G, \\
0, & y\in \overline{{\Bbb R}^n}\setminus G\end{array} \right.$$
is a Borel function, as well.  Now suppose that $f(E)$ is a Borel
set. Let $B_0$ and $C_k^*,$ $k=1,2,\ldots ,$ be as in Lemma
\ref{lem2.4}. Setting by induction $B_1=C_1^*,$ $B_2=C_2^*\setminus
B_1,\ldots ,$ and
\begin{equation} \label{eq7.3.7y} B_k=C_k^*\setminus
\bigcup\limits_{\nu=1}\limits^{k-1}B_{\nu}\,,
\end{equation}
we obtain a countable covering of $D$ consisting of mutually
disjoint Borel sets $B_k, k=0,1,2,\ldots $ with $m(B_0)=0,$
$B_0=D\setminus \bigcup\limits_{k=1}^{\infty} B_k.$

Note that by 2) in Remark 9.1 in \cite{MRSY} ${\mathcal A}_S(B_0)=0$
for $p$-a.e. $k$-dimensional surface $S$ in $\Omega$ and by $(A^{(1,
p)}_k)$-property ${\mathcal A}_{S_*}(f(B_0))=0,$ where $S_*=f\circ
S$ also for a.e. $k$-dimensional surface $S.$

Given $\rho_*\in {\rm adm}\,f(\Gamma),$ set
\begin{equation}\label{eq8.3.5}\rho(x)\ =\ \left \{\begin{array}{rr}
\rho_*(f(x))\Vert f^{\,\prime}(x)\Vert\ ,&\text{for }\ x\in D\setminus B_0, \\
0\ ,& \text{otherwise}\end{array}\right.\end{equation}
Arguing piecewise on $B_l$, we have by 3.2.20 and 1.7.6 in \cite{Fe}
and Theorem 9.1 in \cite{MRSY}, see also Remark 9.2 in \cite{MRSY},
that
$$
\int\limits_S\rho^k\ d{\mathcal A}=\int\limits_{{\Bbb
R}^n}\rho^k(x)N(S, x) d{\mathcal H}^kx=\sum\limits_{\nu=1}^{\infty}
\int\limits_{B_{\nu}}\rho^k(x)N(S, x) d{\mathcal H}^kx=$$
$$=\sum\limits_{\nu=1}^{\infty}
\int\limits_{B_{\nu}}\rho_*(f(x))\Vert f^{\,\prime}(x)\Vert N(S, x)
d{\mathcal H}^kx=$$$$=\sum\limits_{\nu=1}^{\infty}
\int\limits_{B_{\nu}}\frac{\rho_*(f(x))\Vert
f^{\,\prime}(x)\Vert}{J_kf(x)}\cdot J_kf(x) N(f(S), f(x)) d{\mathcal
H}^kx\geqslant$$$$\geqslant\sum\limits_{\nu=1}^{\infty}
\int\limits_{B_{\nu}}\rho_*(f(x))\cdot J_kf(x) N(f(S), f(x))
d{\mathcal
H}^kx=\sum\limits_{\nu=1}^{\infty}\int\limits_{B_{\nu}}\rho_*(y)\cdot
N(f(S), y) d{\mathcal H}^ky=$$
%
%\begin{equation}\label{eq8.3.6}
$$=\int\limits_{f(D)}\rho_*(y)\cdot N(f(S), y) d{\mathcal
H}^ky=\int\limits_{S_*}\rho_*^k\ d{\mathcal A}\geqslant 1$$
%\end{equation}
%0
for a.e. $S\in\Gamma.$

Note that $\rho =\sum\limits_{k=1}\limits^{\infty}\rho_k$, where
$\rho_k = \rho\cdot\chi_{B_k}$ have mutually disjoint supports. By
3.2.5 for $m=n$ in \cite{Fe} we obtain that
$$\int\limits_{f(B_k\cap E)} K_{O, p}\left(f_k^{-1}(y),
f)\cdot\rho_*^p(y\right) dm(y)= \int\limits_{B_k} K_{O, p}(x,
f)\rho_*^p\left(f(x)\right)|J(x, f)|dm(x)=$$
\begin{equation}
\label{eq7.3.7x} =\int\limits_{B_k} \Vert f^{\,\prime}(x)\Vert^p
\rho_*^p\left(f(x)\right)dm(x)=\int\limits_{D}\rho_k^p(x) dm(x)\,,
\end{equation}
where every $f_k=f|_{B_k},$ $k=1,2,\ldots $ is injective by the
construction.

\medskip
Finally, by the Lebesgue positive convergence theorem, see e.g.
Theorem I.12.3 in \cite{Sa}, we conclude from (\ref{eq7.3.7x}) that
$$\int\limits_{f(E)} K_{I,p}(y, f^{-1}, E)\cdot\rho_*^p(y)dm(y)=
\int\limits_{D} \sum\limits_{k=1}\limits^{\infty}\rho_k^p(x)dm(x)
\geqslant M_p(\Gamma)\,.\,\Box$$

\end{proof}

\medskip
{\bf \noindent Evgeny Sevost'yanov} \\
Zhitomir Ivan Franko State University,  \\
40 Bol'shaya Berdichevskaya Str., 10 008  Zhitomir, UKRAINE \\
Phone: +38 -- (066) -- 959 50 34, \\
Email: esevostyanov2009@mail.ru
\end{document}